\newtheorem{theorem}{Theorem}
\newtheorem{lemma}[theorem]{Lemma}
\newtheorem{corollary}[theorem]{Corollary}
\newtheorem{remark}[theorem]{Remark}
\date{}
\begin{document}

\title{\bf Cayley-Bacharach Formulas}\label{chapter:cayley_bacharach}
\author{Qingchun Ren, J\"urgen Richter-Gebert and Bernd Sturmfels}

\maketitle

\begin{abstract}
\noindent
The Cayley-Bacharach Theorem states that
all cubic curves through eight given points in the plane
also pass through a unique ninth point.
We write that point as an explicit rational function
in the other eight.
\end{abstract}
\smallskip

\section{Introduction}

This note concerns the following result from classical algebraic geometry.

\begin{theorem}[Cayley-Bacharach]
\label{thm:CB}
Let $P_1,\ldots,P_8$ be eight distinct points
in the plane, no three on a line, and no six on a conic.
There exists a unique ninth point $P_9$ such that
every cubic curve through $P_1,\ldots,P_8$
also contains~$P_9$. 
\end{theorem}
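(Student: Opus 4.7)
The plan is to prove the theorem by showing that the vector space $V$ of cubic forms vanishing on $P_1,\ldots,P_8$ is exactly $2$-dimensional, i.e., the corresponding cubic curves form a pencil, and then applying B\'ezout's theorem to two independent members of this pencil. Since homogeneous cubic forms in three variables form a ten-dimensional vector space and each passage condition is linear, we have $\dim V \geq 10 - 8 = 2$ immediately.

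The substantive first step is to upgrade this to $\dim V = 2$, i.e., to show the eight points impose independent conditions on cubics. I would prove this by explicit construction: for each $i$, exhibit a cubic vanishing at $P_j$ for all $j \neq i$ but not at $P_i$. The classical device is to partition $\{P_j : j \neq i\}$ into a group of five and a pair, and take the product of the unique conic through the five points with the line through the pair. The resulting cubic passes through the seven points; if it also contained $P_i$, then $P_i$ would lie either on the conic (forcing six points on a conic) or on the line (forcing three collinear points), violating the hypotheses. A combinatorial argument over the $\binom{7}{2} = 21$ possible partitions, together with a small case analysis when the conic degenerates into two lines, shows that some admissible cubic exists.

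Once $\dim V = 2$ is established, the proof concludes quickly. Choose two cubics $F, G$ spanning $V$ with no common component; by B\'ezout they intersect in exactly nine points counted with multiplicity. Eight of these are $P_1,\ldots,P_8$, and the remaining intersection is the desired point $P_9$. Every cubic in $V$ is of the form $\lambda F + \mu G$, hence vanishes on the full intersection of $F$ and $G$, and in particular at $P_9$; this gives existence. Uniqueness follows because any additional common point $P_9'$ would lie on both $F$ and $G$, hence must be one of the nine intersection points; if $P_9' \neq P_9$ then $P_9' = P_i$ for some $i$, which would force a multiplicity at $P_i$, ruled out by genericity.

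The main obstacle is the independence step: one must check that the ``no three collinear, no six on a conic'' assumptions really do force $\dim V = 2$, including in degenerate subcases such as when the auxiliary conic splits into two lines. A secondary technicality is ensuring that $F$ and $G$ meet transversally at each $P_i$, so that the ninth intersection $P_9$ is genuinely a new point. Both subtleties reduce, by contradiction, to producing either three collinear points or six points on a conic among the $P_i$, which the hypotheses forbid. After these points are settled, the pencil-plus-B\'ezout framework yields the theorem cleanly.
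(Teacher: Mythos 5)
The paper does not actually prove Theorem~\ref{thm:CB}: it is quoted as a classical result (Cayley's 1862 article is cited for a complete proof, Kirwan's book for an exercise), and the introduction only records the inequality $\dim V\ge 10-8=2$ before defining $P_9$ as the ninth intersection of two cubics in the system. So you are supplying the standard textbook proof that the paper omits, and its skeleton --- independent conditions, hence a pencil, hence B\'ezout --- is the right one. Two remarks on the middle steps. First, under the stated hypotheses your independence argument needs no case analysis: since no three points are collinear, the conic through any five of them is unique and irreducible (a degenerate conic would put three of the five on one line), so for every $i$ and \emph{any} partition of the remaining seven into $5+2$, the conic times the line is a cubic through those seven missing $P_i$; the $\binom{7}{2}$ enumeration and the split-conic subcase are vacuous here. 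Second, ``choose $F,G$ with no common component'' is not a choice one gets to make --- either every spanning pair of the pencil shares a component or none does --- so it must be ruled out; the cleanest way is to observe that no reducible cubic (line plus conic, or three lines) contains all eight points, since a line carries at most two of them and a conic at most five, so every member of the pencil is irreducible and any two independent members meet properly.

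The one genuine flaw is the closing claim that non-transversality of $F$ and $G$ at some $P_i$ ``reduces, by contradiction, to producing either three collinear points or six points on a conic.'' It does not. There are configurations satisfying both hypotheses for which all cubics of the pencil are mutually tangent at one of the eight points: take $P_1,\ldots,P_8$ on a smooth cubic $E$ with $2P_8=-(P_1+\cdots+P_7)$ in the group law of $E$. For generic such choices no three of the points are collinear and no six lie on a conic, yet the base locus of the pencil (spanned by $E$ and the cubic cutting $P_1+\cdots+P_7+2P_8$ on $E$) is non-reduced at $P_8$, so the set of common points of all cubics through the eight is exactly $\{P_1,\ldots,P_8\}$ and there is no new ninth point. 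This is really an imprecision inherited from the classical statement itself --- the robust formulation is that the scheme-theoretic base locus of the pencil has length nine, with the ninth point possibly infinitely near one of the eight --- but since your write-up explicitly claims to dispose of this case by deriving a forbidden collinearity or conic, that claim is false as stated and should be replaced by the scheme-theoretic (or ``generic configuration'') caveat.
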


\vspace{-0.1in}
\[
\includegraphics[width=.5 \linewidth]{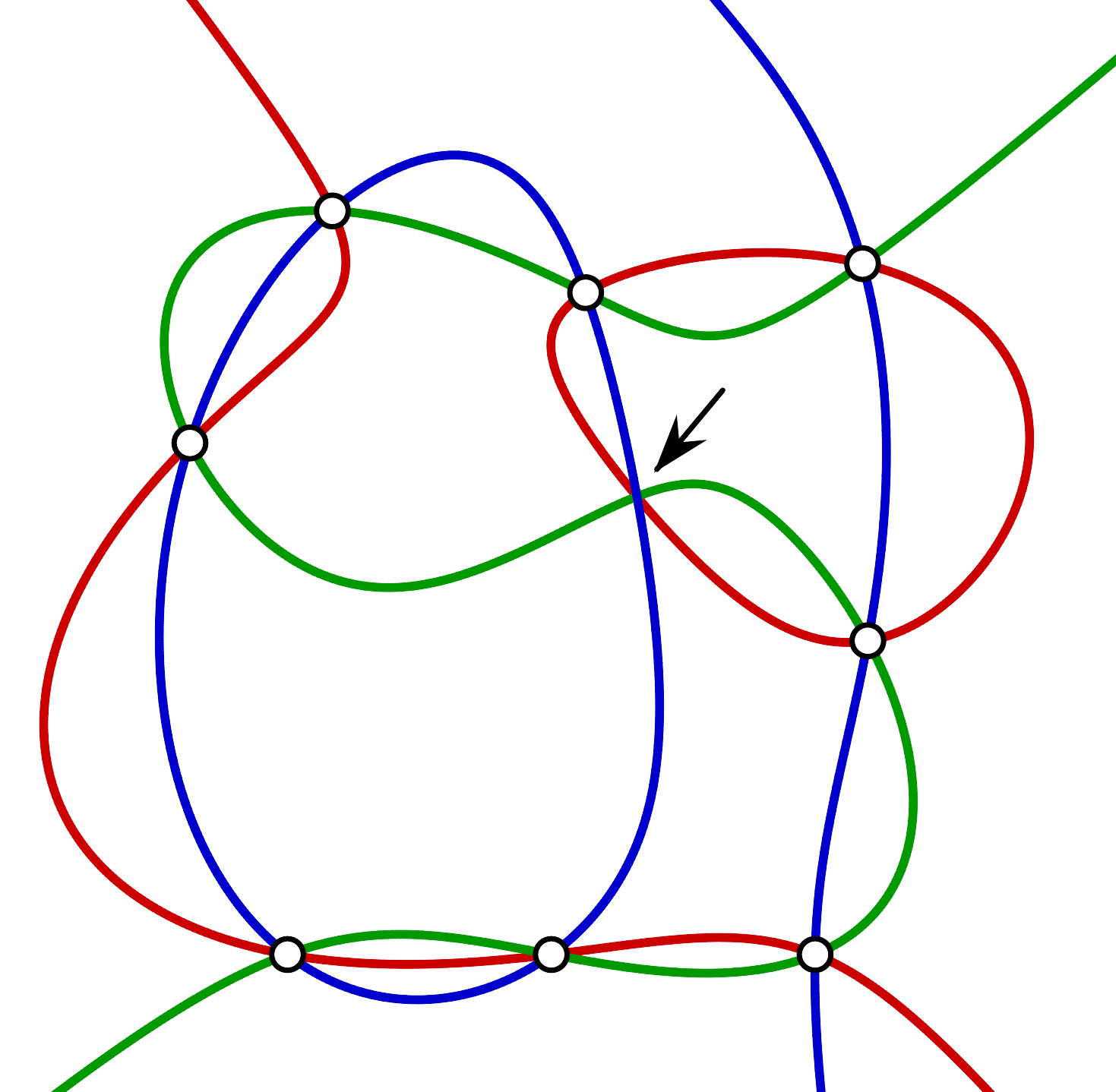}
\]
\vspace{-0.3in}
\[
\mbox{\footnotesize\it All cubics passing through the eight white points meet in a unique ninth point}
\]

This result refers to the projective plane $\mathbb{P}^2$.
It appears
 in most textbooks on plane algebraic curves.  For instance,
Kirwan asks for a proof in \cite[Exercise 3.13]{kirwan1992}. 
Theorem \ref{thm:CB} dates back to classical 19th century work of 
Hart~\cite{hart1851}, Weddle~\cite{weddle1851}, Chasles~\cite{chasles1853}, Cayley~\cite{cayley1862} and others.
While the 1851 articles of Hart and Weddle are mainly focused on
geometric constructions for the ninth point, 
Cayley's 1862 article is more algebraic and gives a complete proof.

In this paper we present explicit formulas for the
Cayley-Bacharach point in terms of algebraic invariants of the
  other eight points.  Our motivation arose
from computational projective geometry~\cite{RG2}.
The aim was to devise
numerically stable schemes for plotting $P_9$
when eight points $P_1,\ldots,P_8$ move
in animations of the Cayley-Bacharach Theorem created with {\tt Cinderella}~\cite{RG1}.
The formulas displayed in (\ref{equation:cayley_bacharach_formula}),
(\ref{equation:cayley_bacharach_simple}) and (\ref{eq:GGG})
are useful for that purpose.

In what follows we present our first formula.
In Section 2 we offer two proofs. The first exposits
Cayley's arguments in \cite{cayley1862}, while the second is a verification using
modern  computer algebra. In Section 3 we present our second formula.
That one is optimal with respect to degree and symmetry.
In Section 4 we close with a discussion on related issues and
further reading.

\smallskip

We write the  {\em Cayley-Bacharach point} $P_9$ 
as a rational expression in terms~of
$$ P_1=(x_1:y_1:z_1), \,P_2=(x_2:y_2:z_2),\, \dotsc{}, \,P_8=(x_8:y_8:z_8).  $$
 Such a formula exists because of the following argument.
   Consider the linear system of cubic curves through $P_1, P_2, \dotsc{}, P_8$. Its dimension is at least $\#\text{degrees of freedom}-\#\text{constraints}=10-8=2$. Choose two
distinct cubics  $C_1$ and $C_2$ in that system. Let
     $P_9 = (x_9:y_9:z_9)$ be their $9$th intersection point.
      In light of Theorem~\ref{thm:CB}, the point
      $P_9$ depends only on $P_1, P_2, \dotsc{}, P_8$.
From this one finds that
the Cayley-Bacharach Theorem  holds over any field. 

\begin{remark}\label{lemma:cayley_bacharach_rational_function}
The quotients $y_9/x_9$ and $z_9/x_9$ can be written as
rational functions in the $24$ unknowns $x_1,y_1,z_1,\dotsc{},y_8,z_8$.
The numerators and denominators of these rational functions
are polynomials with integer coefficients.
\end{remark}

We now define some polynomials that serve as ingredients in our formulas.
The condition for three points to lie on a line is the cubic polynomial
\begin{equation*}
[123] \,\, =\,\, \det
\begin{pmatrix}
x_1 & y_1 & z_1 \\
x_2 & y_2 & z_2 \\
x_3 & y_3 & z_3 \\
\end{pmatrix}.
\end{equation*}
The condition for six points to lie on a conic is given by the polynomial
\begin{equation}
\label{eq:Cbrack}
\begin{matrix}
C(P_1, P_2, \dotsc{}, P_6) & = & \det
\begin{pmatrix}
x_1^2 & x_1y_1 & x_1z_1 & y_1^2 & y_1z_1 & z_1^2 \\
x_2^2 & x_2y_2 & x_2z_2 & y_2^2 & y_2z_2 & z_2^2 \\
x_3^2 & x_3y_3 & x_3z_3 & y_3^2 & y_3z_3 & z_3^2 \\
x_4^2 & x_4y_4 & x_4z_4 & y_4^2 & y_4z_4 & z_4^2 \\
x_5^2 & x_5y_5 & x_5z_5 & y_5^2 & y_5z_5 & z_5^2 \\
x_6^2 & x_6y_6 & x_6z_6 & y_6^2 & y_6z_6 & z_6^2 \\
\end{pmatrix} 
\medskip \\
& = &  \,\,\, [123][145][246][356] - [124][135][236][456].
\end{matrix}
\end{equation}

Here is one more geometric condition of interest to us:
eight points lie on a  cubic curve that is singular at the first point.
This condition is expressed by  a polynomial
 of degree  $7 \cdot 3 + 3 \cdot 2 = 27$,  namely 
 $\,D(P_1; P_2, \dotsc{}, P_8) \,= $
  $$ \det
\begin{pmatrix} 
x_2^3 & x_2^2y_2 & x_2^2z_2 & x_2y_2^2 & x_2y_2z_2 & x_2z_2^2 & y_2^3 & y_2^2z_2 & y_2z_2^2 & z_2^3 \\
x_3^3 & x_3^2y_3 & x_3^2z_3 & x_3y_3^2 & x_3y_3z_3 & x_3z_3^2 & y_3^3 & y_3^2z_3 & y_3z_3^2 & z_3^3 \\
\dotsb{} & \dotsb{} & \dotsb{} & \dotsb{} & \dotsb{} & \dotsb{} & \dotsb{} & \dotsb{} & \dotsb{} & \dotsb{} \\
x_8^3 & x_8^2y_8 & x_8^2z_8 & x_8y_8^2 & x_8y_8z_8 & x_8z_8^2 & y_8^3 & y_8^2z_8 & y_8z_8^2 & z_8^3 \\
3x_1^2 & 2x_1y_1 & 2x_1z_1 & y_1^2 & y_1z_1 & z_1^2 & 0 & 0 & 0 & 0 \\
0 & x_1^2 & 0 & 2x_1y_1 & x_1z_1 & 0 & 3y_1^2 & 2y_1z_1 & z_1^2 & 0 \\
0 & 0 & x_1^2 & 0 & x_1y_1 & 2x_1z_1 & 0 & y_2^2 & 2y_1z_1 & 3z_1^2 \\
\end{pmatrix}.
$$
In these formulas we can change the indices.  For any
 $i,j,\ldots,k \in \{1,2,\ldots,8\}$, the expressions
$[ijk]$, $C(P_i, P_j, \dotsc{}, P_k)$ and $D(P_i; P_j, \dotsc{}, P_k)$
are well-defined homogeneous polynomials with  integer coefficients
in  $24 $ unknowns $x_i,y_j,z_k$.

To state the first main result of this note, we abbreviate
\begin{align*}
C_x &= C(P_1, P_4, P_5, P_6, P_7, P_8), & 
C_y &= C(P_2, P_4, P_5, P_6, P_7, P_8),\\
C_z &= C(P_3, P_4, P_5, P_6, P_7, P_8), &
D_x &= D(P_1; P_2, P_3, P_4, P_5, P_6, P_7, P_8),\\
D_y &= D(P_2; P_3, P_1, P_4, P_5, P_6, P_7, P_8),&
D_z &= D(P_3; P_1, P_2, P_4, P_5, P_6, P_7, P_8).
\end{align*}

\begin{theorem}\label{theorem:cayley_bacharach_formula}
The Cayley-Bacharach point is given by the formula
\begin{equation}\label{equation:cayley_bacharach_formula}
P_9 \,\, = \,\, C_xD_yD_z \cdot P_1 \,+\, D_xC_yD_z \cdot P_2 \,+\, D_xD_yC_z \cdot P_3.
\end{equation}
Equivalently, the coordinates of $P_9$ are the rational functions
\begin{equation}
\label{eq:x9y9z9}
\begin{matrix}
x_9 &=& C_xD_yD_zx_1 + D_xC_yD_zx_2 + D_xD_yC_zx_3, \\
y_9 &=& C_xD_yD_zy_1 + D_xC_yD_zy_2 + D_xD_yC_zy_3, \\
z_9 &=& C_xD_yD_zz_1 + D_xC_yD_zz_2 + D_xD_yC_zz_3.
\end{matrix}
\end{equation}
\end{theorem}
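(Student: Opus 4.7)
The plan is to leverage the uniqueness in Theorem~\ref{thm:CB}: the Cayley-Bacharach point is the only point in $\mathbb{P}^2\setminus\{P_1,\ldots,P_8\}$ lying on every cubic through the eight given points. Since the formula is an identity of rational functions in the $24$ coordinates, I restrict to the Zariski-dense open locus where the hypotheses of Theorem~\ref{thm:CB} hold and $D_x,D_y,D_z$ are all nonzero. On that locus $P_1,P_2,P_3$ form a projective frame, so every point of $\mathbb{P}^2$ admits a unique representation $\alpha P_1+\beta P_2+\gamma P_3$; the task reduces to pinning down the ratio $(\alpha:\beta:\gamma)$ for $P_9$.

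The cubics vanishing on $P_1,\ldots,P_8$ form a two-dimensional vector space, so it suffices to exhibit two independent cubics $\Phi_1,\Phi_2$ in it and verify that $\Phi_1(R)=\Phi_2(R)=0$ for $R:=C_xD_yD_z\,P_1+D_xC_yD_z\,P_2+D_xD_yC_z\,P_3$; the uniqueness in Theorem~\ref{thm:CB} then forces $R=P_9$. Following Cayley~\cite{cayley1862}, I would construct $\Phi_1,\Phi_2$ from the $10\times 10$ determinants that define $D_x$, $D_y$, $D_z$: replacing selected rows by the generic monomial row $(x^3,x^2y,\ldots,z^3)$ yields cubic forms in $(x,y,z)$ whose coefficients are minors directly expressible in terms of the very brackets appearing in the formula, and two suitable row replacements produce a basis of the pencil.

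The technical core is then the expansion of $\Phi_i(\alpha P_1+\beta P_2+\gamma P_3)$ by multilinear polarization. Because $\Phi_i$ vanishes at each of $P_1$, $P_2$, $P_3$, the pure-cube monomials $\alpha^3$, $\beta^3$, $\gamma^3$ drop out, leaving a homogeneous cubic in $(\alpha,\beta,\gamma)$ whose coefficients are symmetric trilinear polarizations of $\Phi_i$ on triples drawn from $\{P_1,P_2,P_3\}$. Substituting $(\alpha,\beta,\gamma)=(C_xD_yD_z,D_xC_yD_z,D_xD_yC_z)$ and invoking the classical bracket identities linking the conic values $C_x,C_y,C_z$ to the polar conics of $\Phi_i$ and the singular-cubic brackets $D_x,D_y,D_z$ to its polar cubics should collapse both $\Phi_1(R)$ and $\Phi_2(R)$ to zero.

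The main obstacle is exactly this final collapse: the polarized coefficients of $\Phi_i$ are substantial symbolic expressions, and matching them with the $CDD$ products in the formula requires a careful reading of Cayley's determinantal identities from~\cite{cayley1862}. If reproducing that bookkeeping by hand proves too delicate, the fallback is a direct symbolic verification in a computer algebra system, which is presumably the alternative second proof alluded to in the introduction.
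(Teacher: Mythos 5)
Your framework is essentially a coordinate-free version of the paper's second proof: after normalizing so that $P_1,P_2,P_3$ span, writing the unknown point as $\alpha P_1+\beta P_2+\gamma P_3$ and expanding a cubic through the eight points gives exactly a relation of the shape \eqref{equation:u_v_relation} (no pure-cube terms, seven polarized coefficients), which is what the paper extracts as the $9\times 9$ minors of the matrix \eqref{large_matrix}. So the skeleton is sound. But the entire content of Theorem~\ref{theorem:cayley_bacharach_formula} lives in the step you defer: showing that the specific ratio $(\alpha:\beta:\gamma)=(C_xD_yD_z:D_xC_yD_z:D_xD_yC_z)$ annihilates those polarized cubics. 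You invoke unnamed ``classical bracket identities linking the conic values to the polar conics,'' but no such identities are stated or proved, and there is no a priori reason the polarization coefficients of a determinantal cubic match the $CDD$ products without a substantial bracket computation. Your fallback of ``direct symbolic verification'' is also not automatic: the paper explicitly reports that naively expanding the resulting ten identities is beyond a standard computer, and its proof only goes through because the expression is reorganized into nested quotients (e.g.\ verifying that $A_1C_yD_z+A_2C_zD_y$ is divisible by $C_x$) to control intermediate growth. As written, the proposal is a plan whose decisive step is missing.

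Two smaller points. First, your construction of the two cubics $\Phi_1,\Phi_2$ by ``replacing selected rows'' of the $10\times 10$ determinants defining $D_x,D_y,D_z$ does not work as stated: replacing one of the three derivative rows at $P_1$ by the generic monomial row yields a cubic that need not pass through $P_1$ at all (Euler's relation ties $\Phi(P_1)$ to all three partials, so imposing only two of them gives nothing); the clean way to get the pencil is the rank condition on the $9\times 10$ matrix \eqref{large_matrix}, or to border the $8\times 10$ matrix of the eight points with the generic row and one auxiliary row. Second, concluding $R=P_9$ from $\Phi_1(R)=\Phi_2(R)=0$ needs the additional (easy, generic) observation that $R$ is none of $P_1,\ldots,P_8$, since $\Phi_1\cap\Phi_2$ consists of all nine points. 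Note also that the paper's first proof takes a genuinely different and more explicit route you might prefer if you want a hand computation: Cayley's cross-ratio characterization \eqref{eq:cayley} reduces the problem to two bilinear quadrics in the coordinates of $P_9$ with respect to the basis $P_6,P_7,P_8$, whose fourth common zero is written down in closed form.
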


The following identity allows us
to write  the coefficients in (\ref{equation:cayley_bacharach_formula}) 
 in terms of the brackets $[ijk]$.
This can be verified using a computer algebra system.
\begin{small}
\begin{equation}
\label{eq:Dbrack}
\begin{matrix}
D(P_7;P_1,P_2,P_3,P_4,P_5,P_6,P_8) \,\,=  \qquad \qquad \qquad \qquad \\
\quad 3 \cdot ([647][857][478][128][173][423][573][526][176]\\
\qquad -[647][857][473][428][178][123][573][526][176]\\
 \qquad + [647][857][473][428][178][576][126][173][523]\\
\qquad + [657][847][573][528][178][123][473][426][176]\\
\qquad -[657][847][578][128][173][523][473][426][176]\\
\qquad \,\,\,\, -[657][847][573][528][178][476][126][173][423]).
\end{matrix}
\end{equation}
\end{small}

The following lemma is implied by the bracket expansions
in \eqref{eq:Cbrack} and \eqref{eq:Dbrack}.

\begin{lemma}\label{lemma-CDT}
Let $T$ be a projective transformation on $\mathbb{P}^2$, expressed as a $3 \times 3$ matrix
that acts on the homogeneous coordinates of the points $P_i$. Then
\begin{align*}
C(T(P_1), T(P_2), \dotsc{}, T(P_6)) &\,\,=\,\, \det{}(T)^4 \cdot C(P_1, P_2, \dotsc{}, P_6),\\
D(T(P_1); T(P_2), \dotsc{}, T(P_8)) &\,\,=\,\, \det{}(T)^9 \cdot D(P_1; P_2, \dotsc{}, P_8).
\end{align*}
\end{lemma}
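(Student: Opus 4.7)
The plan is to read off both transformation rules directly from the bracket expansions \eqref{eq:Cbrack} and \eqref{eq:Dbrack}. The underlying principle is the behaviour of the brackets themselves: under the substitution $P_r \mapsto T P_r$, the $3 \times 3$ matrix whose rows are $P_i,P_j,P_k$ picks up $T^{\top}$ as a right factor, so $[ijk] \mapsto \det(T)\cdot[ijk]$. Every reduction below is an application of this single scaling rule.

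Given this, the statement for $C$ is immediate. Each of the two monomials on the right-hand side of \eqref{eq:Cbrack} is a product of exactly four brackets, so both scale by $\det(T)^4$, and therefore so does their difference $C(P_1,\ldots,P_6)$. The argument for $D$ is identical, applied to \eqref{eq:Dbrack}: each of the six summands contains exactly nine bracket factors, so the identity yields $D(TP_7;TP_1,\ldots,TP_6,TP_8)=\det(T)^9\cdot D(P_7;P_1,\ldots,P_6,P_8)$. This is the statement of the lemma with $P_7$ playing the distinguished role rather than $P_1$; but since $D(P_i;P_{j_1},\ldots,P_{j_7})$ is defined by the same $10\times 10$ determinantal recipe for any choice of distinguished index, the bracket identity \eqref{eq:Dbrack} relabels to give an analogous nine-fold bracket expansion for $D(P_1;P_2,\ldots,P_8)$, and the claimed scaling by $\det(T)^9$ follows.

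The only step whose verification is not mechanical is the bracket identity \eqref{eq:Dbrack} itself; the paper notes that it can be checked by a computer algebra system, and I would take it for granted here. Once it is granted, the lemma amounts to counting bracket factors per summand: four for $C$ and nine for $D$. An alternative, perhaps more intrinsic, route would be to view $C$ as a determinant built from the $6$-dimensional Veronese embedding $\nu_2$ and to view $D$ as a determinant built from $\nu_3$ together with three derivation rows; then the scaling factors become $\det(\mathrm{Sym}^2 T)=\det(T)^4$ and $\det(\mathrm{Sym}^3 T)\cdot\det(T)^{-1}=\det(T)^{10-1}=\det(T)^9$, respectively. I mention this only as a sanity check on the exponents; the bracket route via \eqref{eq:Cbrack} and \eqref{eq:Dbrack} is the shorter argument.
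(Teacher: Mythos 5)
Your proposal is correct and follows exactly the route the paper intends: the paper states that the lemma ``is implied by the bracket expansions in (\ref{eq:Cbrack}) and (\ref{eq:Dbrack})'', and your argument simply makes that explicit by noting that each bracket scales by $\det(T)$ and counting four bracket factors per monomial for $C$ and nine for $D$ (with the harmless relabelling of the distinguished index in (\ref{eq:Dbrack})). The Veronese/symmetric-power sanity check on the exponents is a nice addition but not part of the paper's argument.
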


In the next section we shall present two proofs of
Theorem \ref{theorem:cayley_bacharach_formula}.

\section{From Cayley to Computer Algebra}

In his 1862 paper~\cite{cayley1862}, Cayley describes a 
geometric construction for expressing $P_9$ rationally in $P_1,\ldots,P_8$. 
The key step is an implicit characterization of $P_9$
in terms of certain cross ratios. 
We set $[\![123456]\!]=C(P_1, P_2,\ldots, P_6)$ and 
\[
(1,2,3,4)_5:=
{
[513][524]
\over
[514][523]
}
\mbox{\quad and \quad}
(1,2,3,4)_{5678}:=
{
[\![567813]\!][\![567824]\!]
\over
[\![567814]\!][\![567823]\!]
}.
\]
The first expression is the cross ratio of the four lines spanned by $P_5$  and one of 
 $P_1,P_2,P_3$ or $P_4$. 
The second expression is the {\em cross ratio of four conics} passing through
 $P_5,P_6,P_7,P_8$ and one of the 
points  $P_1,P_2,P_3,P_4$.  
It is also the cross ratio of the four tangents at any of the intersection points.

\begin{proof}[First Proof of Theorem \ref{theorem:cayley_bacharach_formula}]
Cayley characterizes the point $P_9$ by the identity
\begin{equation}
\label{eq:cayley}
(5,6,7,8)_9 \,\,=\,\,(5,6,7,8)_{1234}.
\end{equation}
We shall prove this identify and then 
derive Theorem  \ref{theorem:cayley_bacharach_formula} from it. Let 
$C_{\lambda,\mu}=\lambda C_1+\mu C_2$ 
denote the pencil of conics through the points $P_1,P_2,P_3,P_4$,
and let  $L_{\lambda,\mu}=\lambda L_1+\mu L_2$ be the pencil of lines through an
auxiliary point~$X$. The intersection of these two pencils,
as $(\lambda:\mu)$ runs through $\mathbb{P}^1$,
is the cubic curve defined by $C_1 L_2 = C_2 L_1 $.
This cubic contains the points $P_1,P_2,P_3,P_4,X$. If we identify the pencil $C_{\lambda,\mu}$ with
$ \mathbb{P}^1$ via coordinates $(\lambda:\mu)$ 
 then  one can verify that the cross ratio of the four conics $C_{\lambda,\mu}$  through   $P_5,P_6,P_7,P_8$
equals $(5,6,7,8)_{1234}$. Similarly the cross ratio of the four lines in $L_{\lambda,\mu}$ through these points is
$(5,6,7,8)_X$.  Hence if  $P_5,P_6,P_7,P_8$ are chosen on the cubic
then $(5,6,7,8)_{1234}=(5,6,7,8)_X$.
Expanding this equation reveals that $X$ lies on a certain conic that passes through $P_5,P_6,P_7,P_8$. This conic is specified by the condition $(5,6,7,8)_X=l$, for some constant $l$. For each $X$ on this conic in general position, with proper choice of $C_1, C_2, L_1, L_2$, we recover a cubic that passes through $P_1,\ldots{},P_8$. In fact, it is the set of intersections of conics and lines that have identical cross ratios with respect to $P_5,P_6,P_7$ in the above sense. Every cubic that passes through $P_1,\ldots{},P_8$ arises this way. Note that this point-cubic correspondence depends only on $P_1,\ldots{},P_7$.

\[
\includegraphics[width=.6 \linewidth]{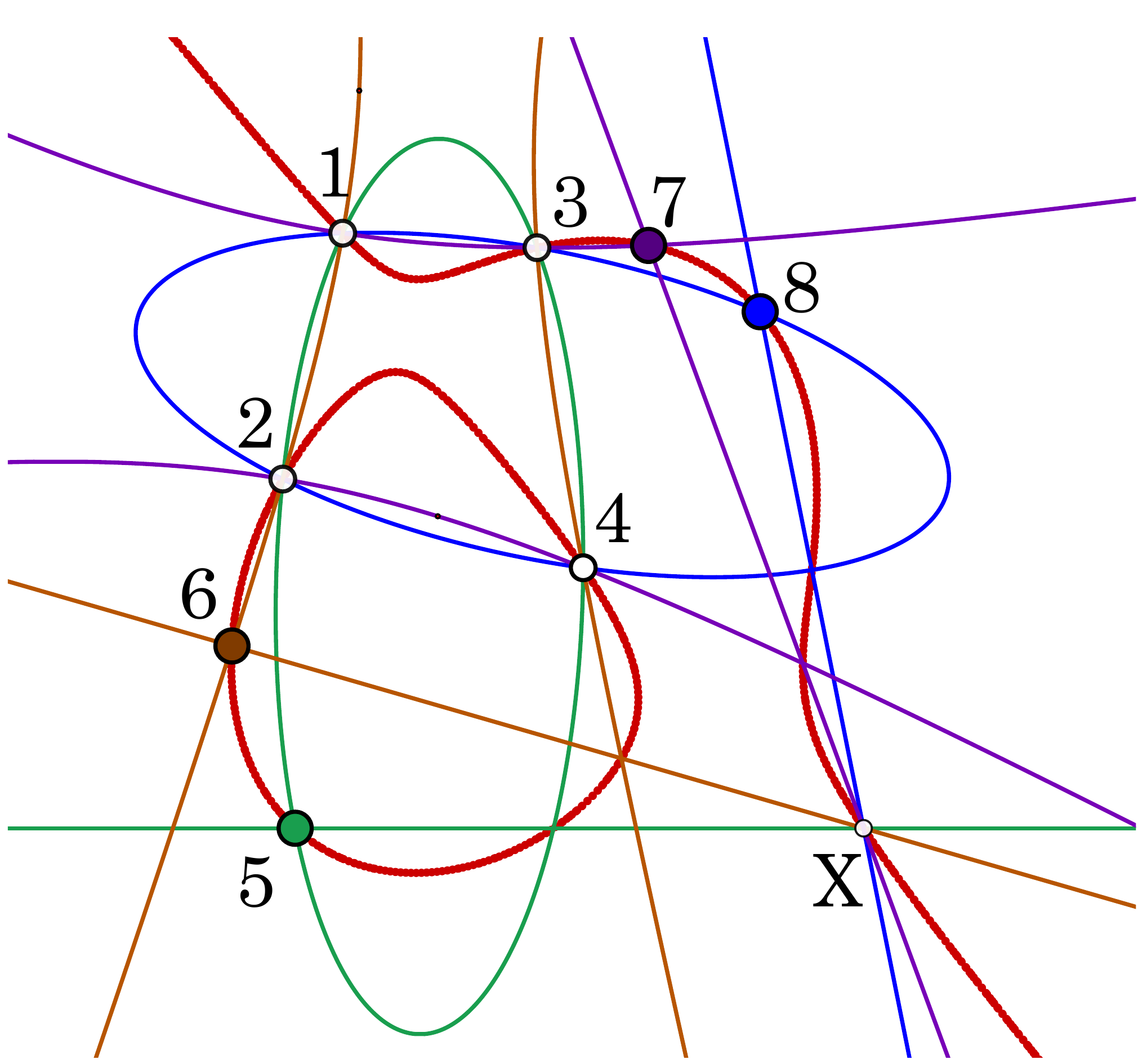}
\]
\[
\mbox{\footnotesize\it 
Construction of cubic curves. Corresponding lines and conics are drawn in the same color.}
\]

Consider the unique cubic through nine points $P_1, \ldots, P_9$ in general position.
It arises by applying the previous construction to any eight of them.
The corresponding point $X$ is in the intersection of the two conics $\mathcal{A} = \{ X :(5,6,7,8)_{1234}=(5,6,7,8)_X\}$ 
and  $\mathcal{B} = \{ X :(5,6,7,9)_{1234}=(5,6,7,9)_X\}$.
The other  intersections are $P_5,P_6,P_7$, so the point $X$ is uniquely specified.

Now assume that $P_9$ is the Cayley-Bacharach point of the other eight.
Then there is no unique cubic through $P_1,\ldots,P_9$. 
 The cubics passing through $P_1,\ldots{},P_8$ are exactly the same as the cubics passing through $P_1,\ldots{},P_7,P_9$.
In the sense of the above point-cubic correspondence, that means the two conics $\mathcal{A}$ and $\mathcal{B}$ must coincide. 
Hence $P_9$ lies on the conic $\mathcal{A} = \{ X \,:\,(5,6,7,8)_{1234}=(5,6,7,8)_X\}$.
We conclude that Cayley's condition (\ref{eq:cayley}) holds.

 \smallskip
 
 We next derive 
 Theorem~\ref{theorem:cayley_bacharach_formula} from (\ref{eq:cayley}).
 Suppose that $P_1,\ldots,P_8$ are given. By symmetry, the 
 Cayley-Bacharach point $P_9$ satisfies the two equations
 \begin{equation}
\label{eq:ccr}
(5,6,7,8)_9=(5,6,7,8)_{1234}=:l
\,\,\, \hbox{and} \,\,\,
(4,6,7,8)_9=(4,6,7,8)_{1235}=:m.
\end{equation}
Under the non-degeneracy assumption 
$[678]\neq0$, we can write $P_9\,=\,aP_6+bP_7+cP_8$.
We regard $(a:b:c)$ as homogeneous coordinates on  $\mathbb{P}^2$.
Inserting this expression for $P_9$ into $\,l = (5,6,7,8)_9\,$ creates the formula
\[l \,\,=\,\,
{ [957][968] \over [958][967] }
\,\,=\,\,
{ (a[657]+c[857])b[768] \over (a[658]+b[758])c[867] }.
\]
This can be simplified to
\[
    [657]\cdot ab \,+\,l[658]\cdot ac \,+\,(1-l)[857]\cdot bc\,\,=\,\,0.
\]
Similarly, inserting $P_9=aP_6+bP_7+cP_8$ into  $m = (4,6,7,8)_9$ leads to
\[
    [647] \cdot ab \,+\, m[648]\cdot ac \,+\,(1-m)[847] \cdot bc \,\,=\,\,0.
\]
These 
 two quadratic 
equations have four solutions in $\mathbb{P}^2$.
Three of them are $(1{:}0{:}0)$,
$(0{:}1{:}0)$ and $(0{:}0{:}1)$, corresponding to our basis points
$P_6$, $P_7$ and $P_8$. The fourth solution 
  $(a:b:c) $ gives the Cayley-Bacharach point $P_9$. It equals
\[\footnotesize
\begin{array}{ccc}
\Big(
(-[647] [857] (l-1) + [657] [847| (m-1)) ([658] [847] l (m-1) - [648] [857] (l-1) m) :\\[2mm]
 -([647] [658] l- [648] [657] m) ([658] [847] l (m-1) -     [648] [857] (l-1 ) m):\\[2mm] 
 -([647] [658] l -    [648] [657] m) ([647] [857] (l-1) + [657] [847] - [847] m))\Big).
\end{array}
\]
We now replace $l$ and $m$ in this expression by the right hand sides in
(\ref{eq:ccr}). After clearing denominators, expanding, dividing by common factors,
and rewriting bracket monomials, we arrive at
the formula (\ref{equation:cayley_bacharach_formula}) for $P_9$.
\end{proof}

Theorem~\ref{theorem:cayley_bacharach_formula}
can also be proved directly, by clever use  of computer algebra.

\begin{proof}[Second Proof of Theorem \ref{theorem:cayley_bacharach_formula}]
The ring $\mathbb{Z}[x_{\cdot{}},y_{\cdot{}},z_{\cdot{}}]$ is $\mathbb{Z}^8$-graded with $\mathrm{deg}(x_i)=\mathrm{deg}(y_i)$ $ =\mathrm{deg}(z_i)=\mathbf{e}_i$.
 The right hand side of (\ref{equation:cayley_bacharach_formula}) is a
 vector of homogeneous   polynomials of multidegree $(9,9,9,8,8,8,8,8)$. 
  By Lemma \ref{lemma-CDT}, it is equivariant
   under projective transformations on $\mathbb{P}^2$, up to a constant factor. 
 We may fix
 \begin{equation}
 \label{eq:eightpoints} \qquad
 \begin{matrix}
 P_1 = (1:0:0), \,\,P_2 = (0:1:0), \,\,P_3 = (0:0:1),  \,\, P_4 = (1:1:1), \\ 
 P_5 = (1:a:b),\,\,   P_6 = (1:c:d), \,\, P_7 = (1:e:f), \,\, P_8 = (1:g:h) . 
 \end{matrix} 
 \end{equation}
 If   (\ref{equation:cayley_bacharach_formula}) holds for
  such configurations of eight points  then it holds in general.

Let $u=y_9/x_9$ and $v=z_9/x_9$.  Since $P_1,P_2,\ldots,P_8,P_9$ lie on
two linearly independent cubics $C_1$ and $C_2$, the following matrix has rank at most $8$:
\begin{equation}\label{large_matrix}
\begin{pmatrix}
1    & 0    & 0    & 0    & 0    & 0    & 0    & 0    & 0    & 0    \\
0    & 0    & 0    & 0    & 0    & 0    & 1    & 0    & 0    & 0    \\
0    & 0    & 0    & 0    & 0    & 0    & 0    & 0    & 0    & 1    \\
1    & 1    & 1    & 1    & 1    & 1    & 1    & 1    & 1    & 1    \\
1    & a    & b    & a^2  & ab   & b^2  & a^3  & a^2b & ab^2 & b^3  \\
1    & c    & d    & c^2  & cd   & d^2  & c^3  & c^2d & cd^2 & d^3  \\
1    & e    & f    & e^2  & ef   & f^2  & e^3  & e^2f & ef^2 & f^3  \\
1    & g    & h    & g^2  & gh   & h^2  & g^3  & g^2h & gh^2 & h^3  \\
1    & u    & v    & u^2  & uv   & v^2  & u^3  & u^2v & uv^2 & v^3  \\
\end{pmatrix}
\end{equation}
Hence the $9 \times 9$-minors of (\ref{large_matrix}) are zero.
This gives $10$ equations in $u$ and $v$ whose coefficients are polynomials in $a, b, \dotsc{}, h$. Each equation is of the form
\begin{equation}\label{equation:u_v_relation}
A_1u^2v + A_2uv^2 + A_3u^2 + A_4uv + A_5v^2 + A_6u + A_7v \,\,=\,\, 0,
\end{equation}
where $A_1,A_2,\dotsc{},A_7\in{}\mathbb{Z}[a, b, c, d, e, f, g, h]$
are the cofactors in \eqref{large_matrix}.

For our special choices of $P_1,\ldots,P_8,P_9$, the formula in
 Theorem \ref{theorem:cayley_bacharach_formula} states 
 \begin{equation}
 \label{eq:assertion}
 u\,=\,\frac{D_xC_y}{C_xD_y} \quad \hbox{and} \quad v\,=\, \frac{D_xC_z}{C_xD_z} .
 \end{equation}
 To show this, we must argue that  \eqref{equation:u_v_relation} 
 holds after the substitution  (\ref{eq:assertion}). Equivalently,
 to prove Theorem \ref{theorem:cayley_bacharach_formula},
 we need to verify the $10$ identities of the form
 \begin{align*}
A_1C_y^2C_zD_x^2D_z+A_2C_yC_z^2D_x^2D_y+A_3C_xC_y^2D_xD_z^2+A_4C_xC_yC_zD_xD_yD_z \\ +A_5C_xC_z^2D_xD_y^2+A_6C_x^2C_yD_yD_z^2+A_7C_x^2C_zD_y^2D_z \,\,=\,\, 0.
\end{align*}
The left hand side lies in $\mathbb{Z}[a,b,c,d,e,f,g,h]$.
We will show that it is zero.

The computation needed to multiply out each term on the left hand side is still too large for a standard computer. A symbolic proof  using the computer algebra system
\texttt{sage}~\cite{sage} involves some tricks to
control intermediate expression growth as follows. 
Namely, we evaluate it in the following form:
\begin{align*}
\frac{C_zD_x\frac{A_1C_yD_z + A_2C_zD_y}{C_x} + A_3C_yD_z^2}{D_y} + A_4C_zD_z 
+ \frac{C_zD_y\frac{A_5C_zD_x + A_7C_xD_z}{C_y} + A_6C_xD_z^2}{D_x} .
\end{align*}
After computing $A_1C_yD_z + A_2C_zD_y$, one verifies that the result is divisible by $C_x$. Similarly, all other fractions in the above expression leave polynomial quotients. Therefore, the sizes of the intermediate results are limited.
\end{proof}
\smallskip

\section{Formula of Minimal Degree}

A natural question is whether the formula (\ref{equation:cayley_bacharach_formula})
is optimal in the sense that it has the lowest degree possible.
The answer is ``no''. We can do better.

The three polynomials in (\ref{eq:x9y9z9}) have multidegree $(9,9,9,8,8,8,8,8)$,
and they are all divisible by $[123]$. Removing that
common factor, we obtain three polynomials in $24$ unknowns
with  greatest common divisor~$1$.
The following statement can be verified with symbolic computations.
A theoretical proof was given in
 the PhD dissertation of the first author in \cite[Chapter 5]{QR}.
 
\begin{corollary}
The following formula for the Cayley-Bacharach point 
is invariant under the  symmetric group $S_8$
and contains no extraneous factor:
\begin{equation}\label{equation:cayley_bacharach_simple}
P_9 \,\,=\,\, \frac{1}{[123]} \cdot \bigl(C_xD_yD_z \cdot P_1 
\,+\, D_xC_yD_z \cdot P_2 \,+\, D_xD_yC_z \cdot P_3 \bigr)
\end{equation}
Its coordinates are homogeneous polynomials of degree $(8,8,8,8,8,8,8,8)$.
\end{corollary}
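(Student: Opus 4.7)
The plan decomposes the corollary into four assertions. Let $F := C_xD_yD_z P_1 + D_xC_yD_z P_2 + D_xD_yC_z P_3$ be the triple inside the parentheses. We must show: (a)~each coordinate polynomial of $F$ is divisible by $[123]$; (b)~the quotient $G := F/[123]$ has multidegree $(8,8,8,8,8,8,8,8)$; (c)~$G$ is $S_8$-invariant as a projective formula; (d)~$\gcd(G_x, G_y, G_z) = 1$. Once (a) holds, $G$ represents $P_9$ by Theorem~\ref{theorem:cayley_bacharach_formula}.

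Parts (b) and (c) are bookkeeping. From $C$ being degree~$2$ in each of its six arguments and $D$ being degree~$6$ in its distinguished argument and~$3$ in each of the other seven, a summand like $C_xD_yD_z P_1$ has multidegree $(2,0,0,2,2,2,2,2)+(3,6,3,3,3,3,3,3)+(3,3,6,3,3,3,3,3)+(1,0,0,0,0,0,0,0) = (9,9,9,8,8,8,8,8)$, and the other two summands yield the same. Subtracting $(1,1,1,0,0,0,0,0)$ for $[123]$ gives (b). For (c), any $\sigma \in S_8$ sends $G$ to another polynomial triple of the same balanced multidegree, and by the $S_8$-symmetry of the Cayley-Bacharach point (Theorem~\ref{thm:CB}) this triple represents the same projective point $P_9$; hence its componentwise ratio with $G$ is a rational function of multidegree zero, i.e., a rational constant, so the projective point is preserved.

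The heart of the matter is (a). I would proceed by a specialization argument: substitute $P_3 \mapsto \alpha P_1 + \beta P_2$ for indeterminates $\alpha, \beta$ and verify $F|_{\mathrm{sub}} = 0$ identically. Writing the substituted cubic monomial row as $\mathbf{r}(\alpha P_1 + \beta P_2) = \alpha^3\mathbf{r}(P_1) + 3\alpha^2\beta\,\mathbf{r}(P_1,P_1,P_2) + 3\alpha\beta^2\,\mathbf{r}(P_1,P_2,P_2) + \beta^3\mathbf{r}(P_2)$ (with an analogous quadratic expansion for the derivative rows), multilinearity expands each of $C_z|_{\mathrm{sub}}, D_x|_{\mathrm{sub}}, D_y|_{\mathrm{sub}}, D_z|_{\mathrm{sub}}$ into a polynomial in $\alpha, \beta$. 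The Euler identity $3\mathbf{r}(P) = x_P\partial_x\mathbf{r}|_P + y_P\partial_y\mathbf{r}|_P + z_P\partial_z\mathbf{r}|_P$, together with repeated-row dependencies, kills most of the resulting coefficient determinants (for instance, $\mathbf{r}(P_1)$ and $\mathbf{r}(P_1,P_1,P_2)$ both lie in the span of the derivative rows at $P_1$, forcing the $\alpha^3$ and $\alpha^2\beta$ coefficients of $D_x|_{\mathrm{sub}}$ to vanish). Assembling $F|_{\mathrm{sub}}$ from the surviving terms should yield the zero triple. Since $[123]$ is irreducible in $\mathbb{Z}[x_\bullet, y_\bullet, z_\bullet]$, vanishing on its hypersurface implies divisibility.

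For (d), specialize the $P_i$ to generic integer coordinates and check that $\gcd(G_x, G_y, G_z) = 1$ as integers; a nontrivial common polynomial factor would specialize to a common integer factor, so this suffices. The main obstacle is (a): the bookkeeping of surviving terms across the product $D_x|_{\mathrm{sub}}\, D_y|_{\mathrm{sub}}\, C_z|_{\mathrm{sub}}$ and its partners is substantial, and in practice one falls back on a symbolic computation, as the corollary's statement indicates; the full theoretical proof, via the invariant theory of the bracket ring, is carried out in~\cite[Chapter 5]{QR}.
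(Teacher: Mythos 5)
Your overall architecture coincides with the paper's: the paper likewise reduces the corollary to the claims that the three polynomials in (\ref{eq:x9y9z9}) have multidegree $(9,9,9,8,8,8,8,8)$, are divisible by $[123]$, and leave a triple with greatest common divisor $1$ after dividing, and then simply states that this ``can be verified with symbolic computations,'' deferring a theoretical proof to \cite[Chapter 5]{QR}. Your degree count in (b) is correct, and your strategy for (a) --- specialize $P_3=\alpha P_1+\beta P_2$, expand each determinant multilinearly in $\alpha,\beta$, and kill coefficients using repeated rows together with the Euler and polarization identities that place $\mathbf{r}(P_1)$ and $\mathbf{r}(P_1,P_1,P_2)$ in the row span of the three derivative rows at $P_1$ --- is sound as far as it goes; it does correctly collapse $D_x|_{\mathrm{sub}}$ to a single $\alpha\beta^2$ term and $D_y|_{\mathrm{sub}}$ to a single $\alpha^2\beta$ term. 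But you stop exactly where the work begins: the surviving terms of $D_z|_{\mathrm{sub}}$ (whose derivative rows are taken at the moving point $\alpha P_1+\beta P_2$, so they expand quadratically and interact with the Veronese rows of $P_1$ and $P_2$) and the cross term of $C_z|_{\mathrm{sub}}$ must be assembled and shown to cancel against the other two summands, and you explicitly hand this to a machine. Since the paper does the same, this is not disqualifying, but part (a) is a plan rather than a proof.

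There is one genuinely wrong step, in (d). A single integer specialization with $\gcd(G_x(p),G_y(p),G_z(p))=1$ does not rule out a nonconstant common polynomial factor $h$: all you may conclude is $h(p)=\pm 1$, and a nonconstant primitive homogeneous form can certainly take the value $\pm 1$ at an integer point (already $x_1-x_2$ at $(3,2)$ does). To certify primitivity you must compute the gcd symbolically, or show the common zero locus of $G_x,G_y,G_z$ has codimension at least two, or use some other argument that sees the polynomial structure and not just one value. Note also that your step (c) quietly depends on (d): ``componentwise ratio of multidegree zero'' does not by itself force a constant (consider $x_1/y_1$); it is primitivity of both triples that upgrades the ratio $\sigma G / G$ to a unit. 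So the logical order should be (a), (b), (d), then (c).
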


The expression \eqref{equation:cayley_bacharach_simple}
 is still not satisfactory because it involves division.
Our second main result is a highly symmetric
formula of optimal degree for $P_9$.
We shall use the following bracket monomial of
degree $(8,8,8,8,8,8,8,7)$:
\begin{equation}
\label{eq:FFF}
F(1,2,3,4,5,6,7;8) \,\,\,:=\,\,\,
\begin{matrix}
&\,\, [128] 
[238]
[348]
[458]
[568]
[678]
[718] \,
\\
& \cdot
[124]
[235]
[346]
[457]
[561]
[672]
[713] \,
\\
&\, \cdot
[126]
[237]
[341]
[452]
[563]
[674]
[715] .
\end{matrix}  \quad
\end{equation}

\begin{theorem}\label{FanoSum}
\label{theorem:jrg}  
The Cayley-Bacharach point $P_9$ is given by the formula
\begin{equation}
\label{eq:GGG}
\sum_{\pi\in S_8}
sign(\pi) \cdot
F(\pi(1),\ldots \pi(7);\pi(8)) \cdot P_{\pi(8)}.
\end{equation}
Here $\pi$ runs over all $40320$
 permutations in the symmetric group $S_{8}$.
\end{theorem}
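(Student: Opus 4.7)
The plan is to show that both sides of (\ref{eq:GGG}) are polynomial vectors of the same multidegree representing the Cayley-Bacharach point, and then to invoke a rigidity lemma to conclude they must be scalar multiples. Set $\Sigma := \sum_{\pi\in S_8}\mathrm{sign}(\pi)\,F(\pi(1),\ldots,\pi(7);\pi(8))\,P_{\pi(8)}$. Three structural observations drive the argument. First, since $F$ has multidegree $(8,8,8,8,8,8,8,7)$ and the factor $P_{\pi(8)}$ raises the $\pi(8)$-th coordinate to degree $8$, every summand, and hence $\Sigma$, has multidegree $(8,\ldots,8)$. Second, the change of summation variable $\pi\mapsto\sigma\pi$ shows $\Sigma(P_{\sigma(1)},\ldots,P_{\sigma(8)})=\mathrm{sign}(\sigma)\,\Sigma(P_1,\ldots,P_8)$, so $\Sigma$ is $S_8$-alternating as a polynomial vector. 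Third, since each bracket scales by $\det(T)$ under $T\in GL_3$, the monomial $F$ (a product of $21$ brackets) scales by $\det(T)^{21}$, and hence $\Sigma$ scales by $\det(T)^{21}\cdot T$. Thus $[\Sigma]\in\mathbb{P}^2$ defines an $S_8$-symmetric, $PGL_3$-equivariant rational map of ordered eight-tuples of points.

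By the Corollary of Theorem~\ref{theorem:cayley_bacharach_formula}, the polynomial vector $\Psi := \tfrac{1}{[123]}\bigl(C_xD_yD_z\,P_1 + D_xC_yD_z\,P_2 + D_xD_yC_z\,P_3\bigr)$ represents $P_9$, has multidegree $(8,\ldots,8)$, and has coprime coordinates. The rigidity lemma I need is: if two polynomial vectors of the same multidegree represent the same point of $\mathbb{P}^2$ and one of them has coprime coordinates, then they are equal up to a nonzero rational scalar. To prove it, write the common ratio $\lambda = \Sigma_i/\Psi_i = p/q$ in lowest terms; then $q\mid\gcd(\Psi_1,\Psi_2,\Psi_3)=1$, so $q$ is a unit, and $\lambda=p$ is a polynomial of multidegree $(0,\ldots,0)$, which must be a constant because the $\mathbb{Z}^8$-grading is positive. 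Theorem~\ref{FanoSum} therefore reduces to the single assertion that $[\Sigma]=P_9$ on a nonempty open set of ordered eight-tuples. This can be established in the normalized chart (\ref{eq:eightpoints}), where one computes $P_9$ explicitly as the ninth intersection of two cubics through the eight specialized points (as in the second proof of Theorem~\ref{theorem:cayley_bacharach_formula}), and then compares with $\Sigma$ evaluated in the same chart, either symbolically or at a rational specialization of the eight parameters $a,b,\ldots,h$.

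The main obstacle is computational: $\Sigma$ is an alternating sum of $40320$ signed products of $21$ brackets each. Grouping by the value $\pi(8)=k$ reduces the coefficient of each $P_k$ to an alternating sum over $S_7$, still with $5040$ terms. One can tame this by exploiting the manifest $\mathbb{Z}/7$ cyclic symmetry, and likely a dihedral enhancement, visible in the three rows of $F$; this collapses each alternating sum to a much smaller number of orbit representatives. Even then, intermediate-expression growth demands the staged-evaluation tricks used in the second proof of Theorem~\ref{theorem:cayley_bacharach_formula}. A conceptual proof that identifies the $S_7$-alternating coefficient of $P_k$ in $\Sigma$ directly with the coefficient of $P_k$ in $\Psi$ via a bracket identity of Plücker or Schur type would be most desirable but appears to require new invariant-theoretic input; this is where the real difficulty lies.
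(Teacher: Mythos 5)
Your proposal takes essentially the same route as the paper: reduce by projective equivariance to the normalized configuration (\ref{eq:eightpoints}), exploit the cyclic and mirror symmetries of $F$ to shrink the alternating sum (the paper gets it down to $2880=8!/14$ summands), and finish with a computer-algebra verification of the resulting polynomial identities, which the paper makes feasible by fixing one variable at nine integer values --- legitimate interpolation since each variable appears to degree $8$. One caveat: your fallback of checking ``at a rational specialization of the eight parameters'' would not prove the polynomial identity --- a single specialization only gives evidence, as the paper itself notes --- so the symbolic verification (or an interpolation scheme like the paper's) is the only rigorous way to close the argument.
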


Before addressing the validity of this formula,
we discuss how it was found.
We looked for a bracket expression of degree
$(8,8,8,8,8,8,8,8)$ that calculates $P_9$ in terms of the other eight points. The points $P_1,\ldots, P_8$ play a symmetric role in the calculation of $P_9$.
Switching any two of them should give the same result.
Furthermore if two points coincide then the formula
 should create the zero vector as an indication for degeneracy. 
 Thus we searched for a formula that was antisymmetric in $1,2,3,4,5,6,7,8$.
 This is ensured by the 
signed summation over $S_8$. Now let us focus on the structure of one summand.
We needed a product of $21$  brackets 
that is multiplied with the homogeneous coordinates of one of the points,
say $P_8$. In that  bracket monomial each other point must occur $8$ times while
$P_8$ occurs $7$ times.  Our  $F(1,2,3,4,5,6,7;8)$ reflects a particularly nice choice.
The first row  involves point $P_8$ seven times, while the pair $P_1 P_2$
is cyclically shifted.  A reasonable assumption is that the remaining 
 $14$ brackets are separated into two
{\em $7_3$ configurations}, i.e.~seven brackets 
with each point occurring    in exactly three triplets. Up to isomorphism there is only one
    $7_3$ configuration:
           {\it the Fano plane}. 
            There are $7!/168=30$ different ways to label a Fano plane.
  Among these precisely two are invariant under cyclically shifting the
indices   $1,\ldots, 7$. 
 These are precisely the Fano planes in the second and third row of (\ref{eq:FFF}):
  
\[
\includegraphics[width=.4\linewidth]{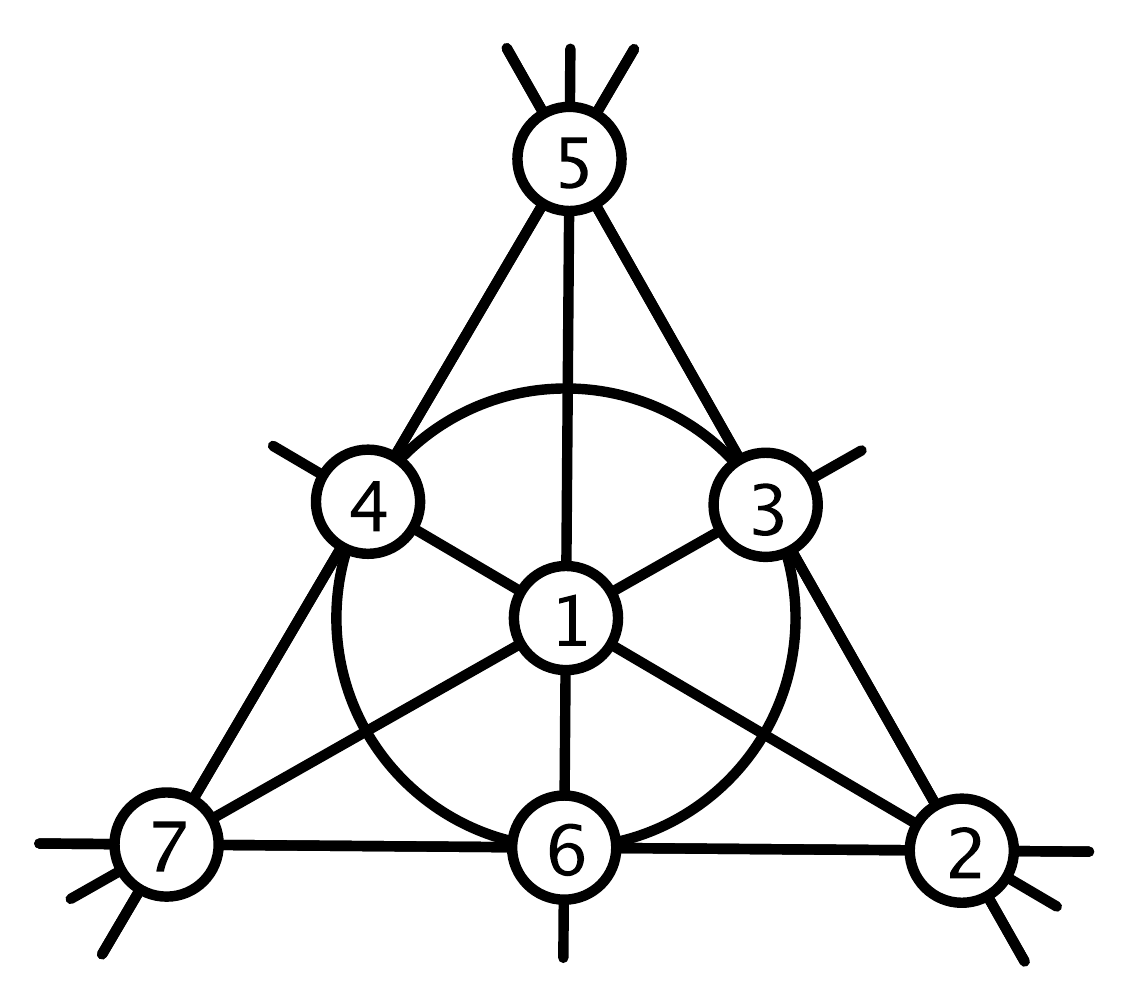}\qquad
\includegraphics[width=.4\linewidth]{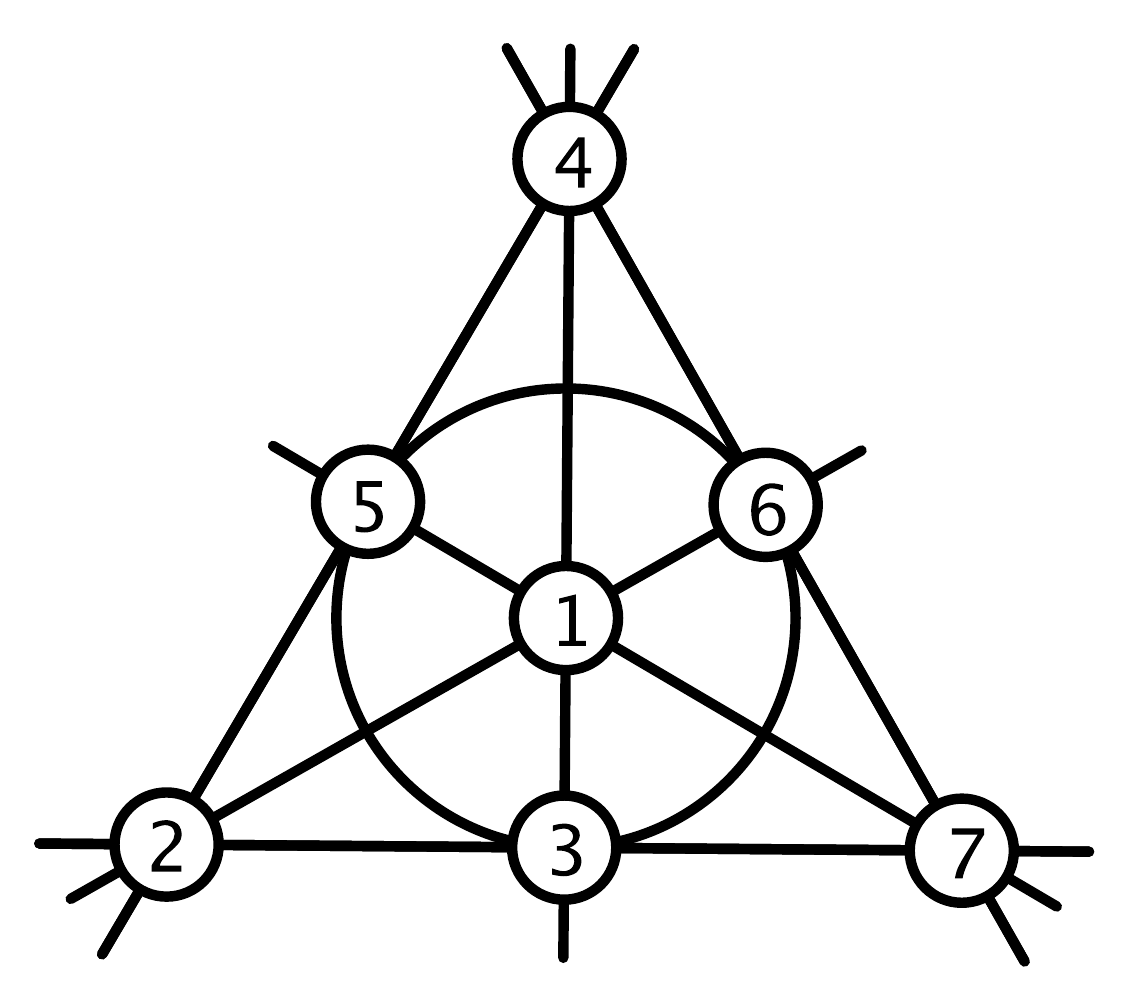}
 \]
 \vspace{-0.35in}
\[
\mbox{\footnotesize\it The two Fano planes appearing in $F(1,2,3,4,5,6,7;8)$.}
\]

\begin{proof}[Proof of Theorem \ref{FanoSum}]
The formula was verified using {\tt Mathematica} by comparing 
\eqref{eq:GGG} with the point $(x_9 \colon y_9 \colon z_9)$
created by the 
formula in Theorem \ref{theorem:cayley_bacharach_formula}. 
Let $(X_9:Y_9:Z_9)$ be the point calculated in \eqref{eq:GGG}.
 To prove Theorem \ref{FanoSum}, it is sufficient to show that 
 $x_9 Y_9= X_9 y_9$ and $x_9 Z_9=X_9 z_9$ for arbitrary choices of the points $P_1,\ldots, P_8$.
 It suffices to verify this for
the coordinates in (\ref{eq:eightpoints}).

   Strong confidence in our identities can be created
    by checking random
   specializations in exact arithmetic.
A brute force approach  by fully expanding (\ref{eq:GGG})
  ends up in combinatorial explosion because summing over $S_8$ creates 40320 terms.
However, one can apply the symmetries of the expression (\ref{eq:FFF})
  to significantly reduce the number of summands.
By cyclic shifting, we have
\[
F(1,2,3,4,5,6,7;8)\,\,=\,\, F(7,1,2,3,4,5,6;8).
\]
Replacing the cycle $1,2,\ldots, 7$ by its mirror image negates the expression:
\[
F(1,2,3,4,5,6,7;8)\,\,= \,\, -F(7,6,5,4,3,2,1;8).
\]
These two symmetries  allow us to perform the summation only over $2880=8!/14$ summands. 
With this simplification, 
we derived 
   a computer algebra proof of Theorem~\ref{FanoSum} using  {\tt Mathematica}.
   As in the proof of Theorem~\ref{theorem:cayley_bacharach_formula},
   we may assume that     (\ref{eq:eightpoints}) holds. 
     A straightforward simplification still ends in a combinatorial explosion.
     However the test can be carried out in approximately six hours if one variable is set to a fixed integer value.
     Since the degree of each variable is just 8, it suffices to perform this test of 9 different choices of this variable.
     This leads to a computer algebra proof, via {\tt Mathematica},      that runs for
     approximately two days on current standard hardware. \end{proof}

\section{Discussion}

Our contribution in this paper are two explicit formulas, 
in Theorems \ref{theorem:cayley_bacharach_formula}
and~\ref{theorem:jrg}, for the Cayley-Bacharach point $P_9$
in terms of eight given points in $\mathbb{P}^2$.
This adds to the geometric constructions known from the 
19th century literature.

A natural analogue to the Cayley-Bacharach Theorem
exists for eight points in $3$-space.
It states: {\em all quadric surfaces through seven given points in $\mathbb{P}^3$
also pass through a unique eighth point.}
The formula for that eighth is easier to derive than the one in Theorem
\ref{theorem:cayley_bacharach_formula}. It can be found in 
\cite[\S 7]{plaumann-sturmfels-vinzant2011}.
Both versions of the Cayley-Bacharach Theorem
play a prominent role in work of
 Blekherman \cite{Ble} on  sum of squares polynomials.
These are motivated by
recent advances in polynomial optimization.
Work of Iliman and De Wolff \cite[\S 3]{IW} suggests
 that our formulas will be useful 
in such domains of application.

Computing the Cayley-Bacharach point also makes sense
in {\em tropical geometry} \cite{MS}. In that setting,
all expressions in our formulas should be evaluated using arithmetic in the
min-plus semiring, with the determinant in the definitions of
$C_x,C_y,C_z,D_x,D_y,D_z$ replaced by the {\em tropical determinant}.
To assess the combinatorial structure and complexity
of the tropicalization of (\ref{equation:cayley_bacharach_simple}),
one examines the Newton polytopes of the numerators
and denominators. 

For example,
suppose $P_1=(1:0:0), P_2=(0:1:0), P_3=(0:0:1)$, and $ P_4=(1:1:1)$. Then
$P_9$ is given by the formula in (\ref{eq:assertion}). The factors
 $C_x,C_y,C_z,D_x,D_y,D_z$ are polynomials in $12$ variables $x_i,y_i,z_i$
 for $5\leq{}i\leq{}8$. 
  It can be verified with the software \texttt{polymake} \cite{polymake} that the
 six Newton polytopes are isomorphic. The f-vector for that common Newton polytope is $$(120,1980,7430,11470,8720,3460,700,60).$$ That is, the polytope has $120$ vertices, $1980$ edges,  and
  $60$ facets. The tropical polynomials
 ${\rm trop}(C_x), \ldots, {\rm trop}(D_z)$ are piecewise linear functions,
each given as the minimum of $120$ linear functions on $\mathbb{R}^{12}$.
From this, we obtain an explicit piecewise linear formula
for  ${\rm trop}(P_9)$  in terms of ${\rm trop}(P_1),\ldots,  {\rm trop}(P_8)$.
That formula is valid for scalars $x_i,y_i,z_i$ in a field
 with valuation, such as the $p$-adic numbers, provided
there is no cancelation of
lowest terms when evaluating  (\ref{equation:cayley_bacharach_simple}).
Unfortunately, cancellations  do occur in many situations,
and this topic deserves to be studied further.
We note that a
 {\em tropical Cayley-Bacharach Theorem} with weaker hypotheses
was given by Tabera in \cite{Tab}.

\smallskip

The Cayley-Bacharach Theorem offers students a friendly
point of entry into classical algebraic geometry \cite{Dolgachev, SempleRoth}. Those who use
computer algebra systems will appreciate our explicit formulas for  $P_9$
in terms of $P_1,\ldots,P_8$.
While the expressions  (\ref{equation:cayley_bacharach_formula}),
(\ref{equation:cayley_bacharach_simple}) and (\ref{eq:GGG}) seem to be new,
they rest on geometric constructions  that are very old and well known, notably from
\cite{cayley1862, chasles1853, hart1851, weddle1851, White}.

Here is one especially nice construction, related to {\em del Pezzo surfaces}.
Let $\mathcal{S}$ be the cubic surface in $\mathbb{P}^3$ that is obtained
by blowing up the plane $\mathbb{P}^2$ at the first six points
$P_1,\ldots,P_6$. Write $\tilde P_7$ and $\tilde P_8$
for the images on $\mathcal{S}$ of $P_7$ and $P_8$.
The line in $\mathbb{P}^3$ through $\tilde P_7$ and $\tilde P_8$ 
meets the cubic surface $\mathcal{S}$ in one other point 
$\tilde P_9$, namely the image in $\mathcal{S}$ of the
desired Cayley-Bacharach point $P_9$.

A referee kindly explained to us how Theorem  \ref{theorem:cayley_bacharach_formula}
can be derived from the {\em Geiser involution};
see \cite[Section 8.7.2]{Dolgachev} or \cite[Section 8.1]{SempleRoth}.
This is a Cremona transformation $G: \mathbb{P}^2 
\dashrightarrow \mathbb{P}^2$  given by 
seven fixed points $P_1,\ldots,P_7$.
Algebraic geometers should think of
fixing a marked del Pezzo surface of degree $2$.
The corresponding Geiser involution is the map $G$ that
takes $P_8$ to the Cayley-Bacharach point $P_9$. In coordinates, one can write
$G: (x{:}y{:}z) \mapsto (G_0(x,y,z) \! : \! G_1(x,y,z) \! : \! G_2(x,y,z))$
where $G_i$ are ternary forms of degree $8$ with
triple points at $P_1,\ldots,P_7$. The punchline is that our
$C_xD_yD_z$, $D_xC_yD_z$ and
$D_xD_yC_z$ are such polynomials of degree $8$ in the unknown
 $P_8 = (x:y:z)$.

\smallskip

In the literature, one can find numerous generalizations of Theorem~\ref{thm:CB}
that also carry the name ``Cayley-Bacharach''. To learn more about these, 
our readers might   start with the 1949 book of Semple and Roth \cite[Section V.1.1]{SempleRoth},
and then proceed to the 1996 article of Eisenbud, Green and Harris~\cite{eisenbud-green-harris1996}. 

\bigskip
\bigskip

{ \footnotesize
\noindent {\bf Acknowledgements}
The first and third author were supported by
the US National Science Foundation (DMS-0968882).
The second author was supported by
the  DFG Collaborative Research Center TRR 109, ``Discretization in Geometry and Dynamics''.
}

\bigskip


\begin{small}

\end{small}

\bigskip
\bigskip

\footnotesize
\noindent {\bf Authors' addresses:}

\smallskip

\noindent Qingchun Ren, Google Inc.,  Mountain View, CA 94043, USA, 
 {\tt qingchun.ren@gmail.com}
 
\medskip
\noindent J\"urgen Richter-Gebert, 
TU M\"unchen,
85747 Garching,
Germany,
{\tt richter@ma.tum.de}
\medskip

\noindent Bernd Sturmfels,  University of California, Berkeley, CA 94720,
USA, {\tt bernd@berkeley.edu}

\end{document}